\newcommand{\CM}{Cohen-Macaulay}
\newcommand{\wrt}{with respect to}
\newcommand{\Sq}{\mathcal{S}_q }
\newcommand{\db}{\mathbf{d}_{d,0} }
\newcommand{\q}{\mathfrak{q} }
\newcommand{\A}{\mathfrak{a} }
\newcommand{\Pc}{\mathcal{P} }
\newcommand{\Qc}{\mathcal{Q} }
\newcommand{\F}{\mathbb{F} }
\newcommand{\xb}{\mathbf{x} }
\newcommand{\G}{\mathbb{G}^\bullet }
\newcommand{\rt}{\rightarrow}
\newcommand{\depth}{\operatorname{depth}}
\newcommand{\Modg}{\operatorname{ ^* Mod}}
\newcommand{\modg}{\operatorname{ ^* mod}}
\newcommand{\Ar}{\operatorname{ ^* A}}
\newcommand{\ann}{\operatorname{ann}}
\newcommand{\codim}{\operatorname{codim}}
\newcommand{\Sym}{\operatorname{Sym}}
\newcommand{\Ext}{\operatorname{Ext}}
\newcommand{\gHom}{\operatorname{ ^* Hom}}
\theoremstyle{plain}
\newtheorem{theorem}{Theorem}[section]
\newtheorem{corollary}[theorem]{Corollary}
\newtheorem{proposition}[theorem]{Proposition}
\theoremstyle{definition}
\newtheorem{remark}[theorem]{Remark}
\theoremstyle{remark}
\begin{document}

\title[Annihilators]{Top dickson class and annihilators of cohomology over invariant rings}
\author{Tony~J.~Puthenpurakal}
\date{\today}
\address{Department of Mathematics, IIT Bombay, Powai, Mumbai 400 076, India}

\email{tputhen@gmail.com}
\subjclass[2020]{Primary 13A50, 13D03, Secondary 13D07, 13D45}
\keywords{invariant rings,  group cohomology, Steenrod operators, local cohomology, cohomological annhilators}

 \begin{abstract}
Let $\mathbb{F}_q$ denote the finite field with $q = p^r$ elements. Let $V$ be a finite dimensional vector space
of dimension $d$ over $\mathbb{F}_q$ and let $G \subseteq GL(V)$ be a  group. Let $R = \mathbb{F}_q[V] = \text{Sym}(V^*)$ and let $S = R^G$.
 Let $\mathbf{d}_{d,0} $ be the top Dickson class, i.e., $\mathbf{d}_{d,0} = \prod_{0\neq v \in V^*}v$. Surprisingly  (a power of) $\db$ annihilates many cohomological modules.

  (a) Let $H^i(G, R)$ be the $i^{th}$-group cohomology of $R$ considered as a $S$-module. Set $J_i = \ann_S H^i(G, R)$.  We show that $\mathbf{d}_{d,0}  \in \sqrt{J_i}$ for all $i \geq 1$.

(b) We also show that $\mathbf{d}_{d,0}  \in \sqrt{ \ann_S H^j_{S_+}(S)}$ for all $ 0 \leq j \leq d - 1$ (here $H^j_{S_+}(S)$ is the $j^{th}$ local cohomology of $S$ with respect to $S_+$).

 As an application we get that there exists a fixed power of $\mathbf{d}_{d,0} $ which works as a cohomological annihilator.
\end{abstract}
 \maketitle
\section{introduction}
\s \label{setup}\emph{Setup:} Let $\F_q$ denote the finite field with $q = p^r$ elements. Let $V$ be a finite dimensional vector space
of dimension $d$ over $\F_q$ and let $G \subseteq GL(V)$ be a  group. Let $R = \F_q[V] = \Sym(V^*)$ and let $S = R^G$.   Let $\db =  \prod_{0\neq v \in V^*}v$ be the top Dickson class. Note $\db \in S$.  In this paper we give abundant cases having a (power of) $\db$  as a cohomological annhilator.

(1) Let $H^i(G, R)$ be the $i^{th}$-group cohomology of $R$ considered as a $S$-module. Let $J_i = \ann_S H^i(G, R)$. By Hilbert's Theorem 90 we get that $J_i \neq  0$ for $i \geq 1$, see \ref{90}. We first show
\begin{theorem}
\label{main}(with hypotheses as in \ref{setup}) For $i \geq 1$ we have $\db \in \sqrt{J_i}$.
 \end{theorem}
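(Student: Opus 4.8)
\medskip
\noindent\emph{Sketch of proof.} The plan is to invert $\db$: I claim $H^i(G,R)_{\db}=0$ for all $i\ge 1$, and that this suffices. Since $G\subseteq GL(V)=GL_d(\F_q)$ is finite, $R$ is a finitely generated $S$-module (Noether), $S$ is a Noetherian $\F_q$-algebra, and each $H^i(G,R)$ is finitely generated over $S$, being a subquotient of a term $\Hom_{\Z G}(\Z[G^{\bullet+1}],R)\cong R^{G^\bullet}$ of the bar complex (a finite direct sum of copies of $R$). Because $\db\in S$ is $G$-invariant, localization at $\db$ is exact, $G$-equivariant, and commutes with taking $G$-invariants, hence commutes with group cohomology: $H^i(G,R)_{\db}\cong H^i(G,R_{\db})$. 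Once the right side is shown to vanish for $i\ge 1$, finite generation over $S$ produces an integer $N$ with $\db^N H^i(G,R)=0$, i.e. $\db^N\in J_i$, so $\db\in\sqrt{J_i}$.

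The heart of the matter is that $G$ acts freely on the open set $\Spec R_{\db}\subseteq\Spec R$. Write $A=R_{\db}$ and $B=A^G=S_{\db}$ (the last equality holds because invariants commute with localization at the invariant element $\db$). In $A$ every nonzero linear form $0\ne v\in V^*$ is a unit, since $v$ divides $\db=\prod_{0\ne v\in V^*}v$. As $G\subseteq GL(V)$ acts faithfully on $V^*=R_1$, for each $1\ne g\in G$ there is $v\in V^*$ with $(g-1)v\ne 0$, and $(g-1)v$ is again a nonzero linear form, hence a unit of $A$. Thus for every nontrivial $g\in G$ there exists $a\in A$ with $(g-1)(a)\in A^{\times}$. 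By the standard criterion this makes $A$ a $G$-Galois extension of $B$ (equivalently, $\Spec A\to\Spec B$ is a $G$-torsor): no nontrivial $g$ can fix a point of $\Spec A$ even infinitesimally, because $(g-1)(a)$, being a unit, lies in no maximal ideal.

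Finally, for a $G$-Galois extension $A/B$ one has $H^i(G,A)=0$ for all $i\ge 1$. The quickest argument: $A$ is faithfully flat over $B$, and $A\otimes_B A\cong\prod_{g\in G}A$ as $\Z G$-modules (with $G$ permuting the factors), which is co-induced and so has vanishing higher cohomology; flat base change along $B\to A$ gives $H^i(G,A)\otimes_B A\cong H^i(G,A\otimes_B A)=0$ for $i\ge 1$, and faithful flatness forces $H^i(G,A)=0$. Applying this with $A=R_{\db}$ finishes the proof.

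The step I expect to require the most care — rather than a genuine obstacle — is justifying that $R_{\db}$ is Galois over its ring of $G$-invariants: one must choose the right formulation of "Galois extension" (or of "free action") and verify the separation condition, and this is exactly where the special nature of $\db$ is used, namely that it is the product of \emph{all} nonzero linear forms, so that $g-1$ acting on $V^*$ lands in the unit group of $R_{\db}$ for every $g\ne 1$. The remaining ingredients — finiteness of $G$, finite generation of $H^i(G,R)$ over $S$, exactness of localization, and cohomological triviality of Galois extensions — are standard.
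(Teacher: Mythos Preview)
Your argument is correct and takes a genuinely different route from the paper. The paper constructs $\F_q$-linear operators $\Qc^m$ on $H^i(G,R)$ satisfying a Cartan formula $\Qc^m(s\alpha)=\sum_{a+b=m}\Pc^a(s)\Qc^b(\alpha)$, uses this to prove that $J_i=\ann_S H^i(G,R)$ is a $\Pc^*$-invariant ideal, shows $J_i\neq 0$ by inverting all of $S\setminus\{0\}$ and invoking additive Hilbert~90 for the Galois field extension $\operatorname{Frac}(R)/\operatorname{Frac}(S)$, and then cites the theorem that every nonzero $\Pc^*$-invariant ideal of $S$ contains $\db$ in its radical. You instead sharpen the localization step: rather than pass all the way to fraction fields you invert only $\db$, and the crucial observation---that $(g-1)v$ is a nonzero linear form, hence a unit in $R_{\db}$, for every $1\neq g\in G$ and suitable $v\in V^*$---shows directly that $R_{\db}/S_{\db}$ is $G$-Galois in the Chase--Harrison--Rosenberg sense, whence $H^i(G,R_{\db})=0$ for $i\geq 1$. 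Your approach is more elementary and self-contained, avoiding the Steenrod machinery entirely; the paper's approach, on the other hand, yields the additional structural information that each $J_i$ is $\Pc^*$-invariant and equips $H^i(G,R)$ with operators $\Qc^m$ (indeed a module structure over the Steenrod algebra), which is of independent interest.
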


(2) \emph{Annihilators of local cohomology modules :} (with hypotheses as in \ref{setup}). Let $S_+ = \bigoplus_{n \geq 1}S_n$ be the irrelevant maximal ideal of $S$. Let $H^i_{S_+}(S)$ be the $i^{th}$-local cohomology modue of $S$ \wrt \ $S_+$. We show
\begin{theorem}\label{loc}(with hypotheses as in \ref{setup}). For $0 \leq j \leq d - 1$ we have
$$ \db \in \sqrt{ \ann_S H^j_{S_+}(S)}.$$
\end{theorem}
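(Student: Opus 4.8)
The plan is to compare $H^\bullet_{S_+}(S)$ with the group cohomology modules $H^\bullet(G,R)$ through an equivariant hypercohomology spectral sequence, and then to feed in Theorem~\ref{main} together with the Cohen--Macaulayness of $R$. First fix a homogeneous system of parameters $\underline{\theta}=\theta_1,\dots,\theta_d$ of $S$; since by Dickson's theorem $R$, and hence $S$, is module-finite over $\F_q[\db,\mathbf{d}_{d,1},\dots,\mathbf{d}_{d,d-1}]$, one may even take $\underline{\theta}=(\db,\mathbf{d}_{d,1},\dots,\mathbf{d}_{d,d-1})$. As $G$ is finite and each $\theta_i\in S$ is $G$-invariant, the \v{C}ech complex $C^\bullet:=\check{C}^\bullet(\underline{\theta};R)$ is a bounded complex of $\F_q[G]$-modules whose complex of $G$-invariants is $\check{C}^\bullet(\underline{\theta};S)$ (because $(R_{\theta_{i_1}\cdots\theta_{i_k}})^G=S_{\theta_{i_1}\cdots\theta_{i_k}}$), so $H^j((C^\bullet)^G)=H^j_{S_+}(S)$ for all $j$. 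On the other hand $\sqrt{(\underline{\theta})R}=\m$ gives $H^j(C^\bullet)=H^j_\m(R)$, and since $R$ is a polynomial ring, hence Cohen--Macaulay of dimension $d$, and $\underline{\theta}$ is a system of parameters of $R$ (by module-finiteness of $R$ over $\F_q[\underline{\theta}]$), we get $H^j(C^\bullet)=0$ for all $j\neq d$.

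Next, choose a resolution $P_\bullet\to\F_q$ of $\F_q$ by finitely generated free $\F_q[G]$-modules and form the double complex $D^{p,q}:=\Hom_{\F_q[G]}(P_q,C^p)$, which is bounded in the $p$-direction and supported in $q\geq 0$, so both of its filtration spectral sequences converge to $H^\bullet(\operatorname{Tot}D)$. Taking the cohomology of $C^\bullet$ first, and using that each $P_q$ is free together with $H^j(C^\bullet)=0$ for $j\neq d$, this spectral sequence collapses and yields $H^n(\operatorname{Tot}D)\cong H^{\,n-d}(G,H^d_\m(R))$; in particular $H^n(\operatorname{Tot}D)=0$ for all $n<d$. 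Taking the cohomology of $P_\bullet$ first gives a spectral sequence with $E_1^{p,q}=H^q(G,C^p)$ and $E_2^{p,q}=H^p\big(H^q(G,C^\bullet)\big)$, whose bottom row is $E_2^{p,0}=H^p((C^\bullet)^G)=H^p_{S_+}(S)$. For each fixed $q\geq 1$ the complex $H^q(G,C^\bullet)$ has terms $H^q(G,R_{\theta_{i_1}\cdots\theta_{i_k}})=H^q(G,R)_{\theta_{i_1}\cdots\theta_{i_k}}$, and if $N_q$ is chosen with $\db^{N_q}H^q(G,R)=0$ (possible by Theorem~\ref{main}), then $\db^{N_q}$ annihilates every term of $H^q(G,C^\bullet)$, hence every $E_2^{p,q}$ with $q\geq 1$, and hence every later page $E_r^{p,q}$ with $q\geq 1$.

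Now fix $n$ with $0\leq n\leq d-1$. Since $E_\infty^{n,0}$ is a subquotient of $H^n(\operatorname{Tot}D)=0$, we get $E_\infty^{n,0}=0$. On the other hand, all differentials out of the bottom row vanish (their targets have negative $q$-degree), so $E_\infty^{n,0}$ is the quotient of $E_2^{n,0}=H^n_{S_+}(S)$ by the sum of the images of the finitely many differentials $d_r\colon E_r^{n-r,\,r-1}\to E_r^{n,0}$ with $2\leq r\leq n$; each such source is a subquotient of $E_2^{n-r,r-1}$ with $r-1\geq 1$, hence is annihilated by $\db^{N_{r-1}}$. Therefore $H^n_{S_+}(S)$ is annihilated by $\db^{N}$ with $N:=\max\{N_1,\dots,N_{d-2}\}$ (the cases $n=0,1$ being vacuous, since $S$ is a normal domain so $H^0_{S_+}(S)=H^1_{S_+}(S)=0$ once $d\geq 2$), giving $\db\in\sqrt{\ann_S H^n_{S_+}(S)}$ uniformly in $0\leq n\leq d-1$ — which is exactly the fixed power needed for the application. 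The conceptual heart of the argument is the realisation that $H^\bullet_{S_+}(S)$ should be accessed through equivariant local cohomology of $R$; the step demanding the most care is the construction of this double complex and the verification that its two spectral sequences converge to the same object. After that, Cohen--Macaulayness of $R$ forces one side to vanish for $n<d$, Theorem~\ref{main} forces the other side to be $\db$-power torsion, and the \emph{uniform} exponent comes for free because only finitely many spectral-sequence differentials can hit the bottom row.
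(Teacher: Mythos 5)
Your proof is correct and is essentially the paper's argument in concrete form: the \v{C}ech--bar double complex you build is exactly a hand-made model of the Ellingsrud--Skjelbred spectral sequences $H^p_{S_+}(H^q(G,R)) \Rightarrow H^{p+q}_{S_+}(G,R)$ and $H^p(G, H^q_{S_+}(R)) \Rightarrow H^{p+q}_{S_+}(G,R)$ that the paper invokes, and the remaining steps (Cohen--Macaulayness of $R$ killing the abutment below degree $d$, Theorem \ref{main} annihilating the rows with $q>0$, then chasing the bottom row) coincide with the paper's proof. One small quantitative slip, irrelevant to the radical statement: since $E_2^{n,0}$ is killed by successive quotients, the uniform exponent should be a sum of the $N_i$ (one for each incoming differential), not their maximum.
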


\emph{Applications:} Annhilators of Local cohomology give a lot of information on annhilators of cohomology in many constructs. For $0 \leq j \leq d - 1$, let $a_j \geq 1$ be the smallest power of $\db$ such that $\db^{a_j}\in \ann_S H^j_{S_+}(S)$. Set $\q_i = \prod_{j = 0}^{i} \db^{a_j}$ for $0 \leq j \leq d -1$. Our first application is a precise version of a result of a result of Roberts, see \cite[Theorem 1]{R} (also see \cite[8.1.2]{BH}).
\begin{corollary}
\label{rob} (with hypotheses as in \ref{setup}). Let $\G = 0 \rt G^0 \rt G^1 \rt \ldots \rt G^r \rt 0$ be a complex of free graded $S$-modules and homogeneous maps such that $H^i(\G)$ has finite length for all $i$. Then $\q_i \in \ann_S H^i(\G)$ for $0 \leq i \leq d-1$.
\end{corollary}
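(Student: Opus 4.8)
The plan is to read off the annihilator of $H^i(\G)$ from the two hypercohomology spectral sequences of the complex $\G$ attached to the section functor $\Gamma_{S_+}(-)$. Recall first that, $G$ being finite, $S=R^G$ is module-finite over the polynomial ring $R$, so $\dim S=d$; by Grothendieck vanishing $H^q_{S_+}(-)=0$ for $q>d$, hence $\Gamma_{S_+}$ has finite cohomological dimension over $S$. Since $\G$ is a bounded complex, applying a $\Gamma_{S_+}$-acyclic resolution (for instance the \v{C}ech complex on a generating set of $S_+$) to each term produces a bounded double complex; its total complex has well-defined hypercohomology $\mathbb{H}^\bullet$, and both iterated spectral sequences converge to it.

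The first spectral sequence has $E_2$-page $E_2^{p,q}=H^q_{S_+}\big(H^p(\G)\big)$, abutting to $\mathbb{H}^{p+q}$. By hypothesis every $H^p(\G)$ has finite length, hence is $S_+$-torsion, so $E_2^{p,q}=0$ for $q\geq1$ while $E_2^{p,0}=H^p(\G)$; the spectral sequence therefore collapses and yields $\mathbb{H}^n\cong H^n(\G)$ for all $n$. The second spectral sequence has $E_2$-page $E_2^{p,q}=H^p\big(H^q_{S_+}(G^\bullet)\big)$, again abutting to $\mathbb{H}^{p+q}$, and this $E_2$-term is a subquotient of $H^q_{S_+}(G^p)$. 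Since $G^p$ is graded free, $H^q_{S_+}(G^p)$ is a direct sum of shifted copies of $H^q_{S_+}(S)$ (local cohomology commutes with direct sums), so by Theorem~\ref{loc} and the choice of $a_q$ it is annihilated by $\db^{a_q}$ for $0\leq q\leq d-1$ and vanishes for $q>d$. Consequently every subquotient of $H^q_{S_+}(G^p)$ — in particular $E_2^{p,q}$ and then the limit term $E_\infty^{p,q}$ — is killed by $\db^{a_q}$ in that range of $q$.

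Now fix $i$ with $0\leq i\leq d-1$. The module $H^i(\G)\cong\mathbb{H}^i$ carries, via the second spectral sequence, a finite descending filtration $0=F^{i+1}\subseteq F^i\subseteq\cdots\subseteq F^1\subseteq F^0=\mathbb{H}^i$ with $F^p/F^{p+1}\cong E_\infty^{p,\,i-p}$. For $0\leq p\leq i$ the index $q=i-p$ lies in $[0,i]\subseteq[0,d-1]$, so $F^p/F^{p+1}$ is annihilated by $\db^{a_{i-p}}$. Working up the filtration — $\db^{a_i}F^0\subseteq F^1$, then $\db^{a_{i-1}}\db^{a_i}F^0\subseteq F^2$, and so on down to $\db^{a_0}\cdots\db^{a_i}F^0\subseteq F^{i+1}=0$ — shows that $\db^{a_0}\db^{a_1}\cdots\db^{a_i}$ annihilates $\mathbb{H}^i$. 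Since this product equals $\q_i$, we conclude $\q_i\in\ann_S H^i(\G)$.

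I do not expect a real obstacle once Theorem~\ref{loc} is available; the corollary is essentially formal. The two points meriting care are (i) the convergence of both spectral sequences, which requires only that $\G$ be bounded and that $\Gamma_{S_+}$ have finite cohomological dimension over $S$, and (ii) the bookkeeping that exactly the local-cohomology degrees $q=0,1,\dots,i$ occur among the graded pieces of the filtration of $\mathbb{H}^i$. The latter is forced by $p\geq0$ together with $i\leq d-1$, and this is precisely where the hypothesis $i\leq d-1$ enters: it keeps the module $H^d_{S_+}(S)$ — which in general need not be annihilated by any power of $\db$ — out of the filtration, so that the annihilator one obtains is exactly $\q_i=\prod_{j=0}^{i}\db^{a_j}$ and no larger.
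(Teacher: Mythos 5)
Your proof is correct and follows the same route the paper intends: the paper does not argue this corollary directly but cites the standard argument for Roberts' theorem in \cite[8.1.2]{BH}, which is exactly the \v{C}ech double-complex/hypercohomology spectral sequence comparison you wrote out, with Theorem \ref{loc} supplying the annihilators $\db^{a_j}$ of $H^j_{S_+}(S)$ for $0 \leq j \leq d-1$. Your bookkeeping (collapse of the first spectral sequence by $S_+$-torsionness of the finite-length $H^p(\G)$, annihilation of the second $E_2$-page via freeness of the $G^p$, and the filtration argument producing $\q_i$) is the intended proof.
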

Our next application is a precise version of a result due to Schenzel, see \cite[Theorem 2]{Sc}  (also see \cite[8.1.4]{BH}).
\begin{corollary}
\label{Sch}(with hypotheses as in \ref{setup}). Let $\xb = x_1,\dots, x_d$ be a homogeneous system of parameters of $S$. Then
$$ \q_{d-1} \in \ann_S \frac{(x_1, \ldots x_{t-1})\colon x_t}{(x_1, \ldots, x_{t-1})} \quad \text{for t = 1, \ldots, d}.$$
\end{corollary}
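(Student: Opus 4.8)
The plan is to derive Corollary~\ref{Sch} from Theorem~\ref{loc} along the classical lines of Schenzel \cite[Theorem~2]{Sc} and \cite[8.1.4]{BH}, keeping the annihilator explicit. Fix $t\in\{1,\dots,d\}$ and put $\overline{S}:=S/(x_1,\dots,x_{t-1})$, so that the module to be annihilated is $C_t:=(0:_{\overline{S}}x_t)$ and we must show $\q_{d-1}C_t=0$. The case $t=1$ is trivial: $S=R^G$ is a domain and $x_1\neq0$, so $C_1=(0:_Sx_1)=0$. The borderline case $t=d$ can be settled directly through Corollary~\ref{rob}: reading the Koszul complex $K_\bullet(x_1,\dots,x_d;S)$ cohomologically ($\G^i:=K_{d-i}$), its cohomology $H^i(\G)=H_{d-i}(K_\bullet(x_1,\dots,x_d;S))$ has finite length since $(x_1,\dots,x_d)$ is $S_+$-primary, so $\q_{d-1}\in\ann_S H^{d-1}(\G)=\ann_S H_1(K_\bullet(x_1,\dots,x_d;S))$, and $C_d$ is a quotient of $H_1(K_\bullet(x_1,\dots,x_d;S))$ via the standard exact sequence comparing the Koszul complexes of $x_1,\dots,x_{d-1}$ and of $x_1,\dots,x_d$. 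For general $t<d$ this fails: $\overline{S}$ is in general not \CM\ and $C_t$ need not have finite length (one has only $(x_1,\dots,x_t)\subseteq\ann_S C_t$, whence $\dim_S C_t\le d-t$), so Corollary~\ref{rob} is not directly available and one must pass through local cohomology.

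By Theorem~\ref{loc}, $\db^{a_j}\in\ann_S H^j_{S_+}(S)$ for $0\le j\le d-1$, so $\q_{d-1}=\prod_{j=0}^{d-1}\db^{a_j}$ lies in $\mathfrak a:=\prod_{j=0}^{d-1}\ann_S H^j_{S_+}(S)$. Hence it suffices to prove the classical fact that $\mathfrak a$ annihilates $((x_1,\dots,x_{t-1}):x_t)/(x_1,\dots,x_{t-1})$ for every homogeneous system of parameters of $S$. I would prove this by peeling the parameters off one at a time: passing from $M=S/(x_1,\dots,x_{i-1})$ to $M/x_iM=S/(x_1,\dots,x_i)$ via the four-term sequence $0\to C_i\to M\xrightarrow{x_i}M\to M/x_iM\to0$, split into $0\to C_i\to M\to x_iM\to0$ and $0\to x_iM\to M\to M/x_iM\to0$, and chasing the associated long exact sequences in $H^\bullet_{S_+}(-)$. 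Since $\dim_S C_i\le\dim_S M-1$, the colon module $C_i=(0:_Mx_i)$ — which occurs as the kernel term of this process — has the top local cohomology as its only delicate piece, and that is controlled degree by degree.

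The main obstacle is the bookkeeping of the powers of $\db$. The three modules $C_i$, $x_iM$, $M/x_iM$ are intertwined by their long exact sequences, and a careless chase yields only some power of $\q_{d-1}$. The point — which is exactly why the relevant ideal is the \emph{product} $\prod_j\ann_S H^j_{S_+}(S)$ and not the intersection — is to organise the induction so that the transition affecting cohomological degree $j$ consumes only the factor $\ann_S H^j_{S_+}(S)$, i.e.\ only $\db^{a_j}$; the product over $0\le j\le d-1$ is then precisely $\q_{d-1}$. Equivalently one may dualise: over a graded Noether normalization $T\cong\F_q[y_1,\dots,y_d]$ of $S$ the deficiency modules $K^j(M):=\Ext^{d-j}_T(M,T)$ are, up to a twist, the graded Matlis duals of the $H^j_{S_+}(M)$, hence have the same annihilators, and the vanishing $K^j(C_i)=0$ for $j>\dim_S C_i$ separates the cohomological degrees in the induction so that $\mathfrak a$ emerges as the product. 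Either way, feeding $\db^{a_j}\in\ann_S H^j_{S_+}(S)$ from Theorem~\ref{loc} into this fact gives $\q_{d-1}C_t=0$ for $t=1,\dots,d$, which is the assertion.
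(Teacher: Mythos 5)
Your proposal is correct and follows essentially the same route as the paper: the paper obtains Corollary~\ref{Sch} by noting that Theorem~\ref{loc} puts $\q_{d-1}$ in the product $\prod_{j=0}^{d-1}\ann_S H^j_{S_+}(S)$ and then invoking the standard result of Schenzel \cite[Theorem 2]{Sc}, \cite[8.1.4]{BH}, exactly as you do. Your separate treatments of $t=1$ and $t=d$ and your sketch of the classical fact are fine but not needed beyond the citation.
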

Finally we give a precise version of a result in \cite[8.1.3]{BH}.
\begin{corollary}
\label{Sch-bh}(with hypotheses as in \ref{setup}).
Let $\xb = x_1,\dots, x_m$ be a homogeneous set of elements in $S$ such that $\codim (\xb) = m$. Then for $i = 1, \ldots, m$ we get that $\q_{d-i}$ is in the annhilator of the Koszul homology $H_i(\xb)$.
\end{corollary}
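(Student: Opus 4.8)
The plan is to derive Corollary~\ref{Sch-bh} from Corollary~\ref{rob} by a descending induction on codimension. The clean way to package this is to prove, for all $c$ with $m\le c\le d$, the following: \emph{if $F^\bullet$ is a bounded complex of finitely generated free graded $S$-modules and homogeneous maps with $\codim_S H^j(F^\bullet)\ge c$ for every $j$, then $\q_{j+d-c}\cdot H^j(F^\bullet)=0$ for every $j\le c-1$.} Corollary~\ref{Sch-bh} is the instance $F^\bullet=\Hom_S(K_\bullet(\xb;S),S)$ with $c=m$: by self-duality of the Koszul complex $H^j(F^\bullet)\cong H_{m-j}(\xb;S)$, this module is killed by the ideal $(\xb)S$ of height $m$, hence $\codim_S H^j(F^\bullet)\ge m$, and putting $j=m-i$ with $1\le i\le m$ gives $\q_{(m-i)+d-m}\cdot H_i(\xb;S)=\q_{d-i}\cdot H_i(\xb;S)=0$. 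The base case $c=d$ is exactly Corollary~\ref{rob}, since $\codim\ge d$ means finite length and the exponent $j+d-c$ becomes $j$.

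For the inductive step, assume the statement for $c+1$ and let $F^\bullet$ have $\codim_S H^j(F^\bullet)\ge c$ with $c\le d-1$. First I would record a consequence of Theorem~\ref{loc}: that theorem is equivalent to the assertion that the non-\CM\ locus of $S$ is contained in $V(\db)$, i.e.\ $S_\mathfrak p$ is \CM\ whenever $\db\notin\mathfrak p$; since the height of an ideal cannot drop under localization, $\mathrm{ht}\bigl((\xb)S_\mathfrak p\bigr)\ge m$ for all $\mathfrak p\supseteq(\xb)S$, whence $\xb$ is $S_\mathfrak p$-regular for $\db\notin\mathfrak p$, and therefore $\mathrm{Supp}_S H_i(\xb;S)\subseteq V(\db)$ and each $H_i(\xb;S)$ is $\db$-power torsion for $i\ge1$. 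Next choose a homogeneous $y\in S_+$ of large degree that is filter-regular on each $H^j(F^\bullet)$ and avoids the top-dimensional minimal primes of each $H^j(F^\bullet)$ (finitely many conditions). Tensoring $F^\bullet$ with $\bigl(S\xrightarrow{\,y\,}S\bigr)$ gives short exact sequences
$$0\rt H^{j-1}(F^\bullet)/yH^{j-1}(F^\bullet)\rt H^j\!\bigl(F^\bullet\otimes(S\xrightarrow{y}S)\bigr)\rt \bigl(0:_{H^j(F^\bullet)}y\bigr)\rt 0,$$
and the choice of $y$ makes every cohomology module of $F^\bullet\otimes(S\xrightarrow{y}S)$ have codimension $\ge c+1$. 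Applying the inductive hypothesis and reading the result off these sequences (using $j+d-c-1=(j-1)+(d-c)$) yields, for $j\le c-1$,
$$\q_{j+d-c}\cdot\bigl(H^j(F^\bullet)/yH^j(F^\bullet)\bigr)=0\qquad\text{and}\qquad \q_{j+d-c-1}\cdot\bigl(0:_{H^j(F^\bullet)}y\bigr)=0,$$
the second being stronger than needed since $\q_{j+d-c-1}$ divides $\q_{j+d-c}$.

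The hard part is the last step: upgrading $\q_{j+d-c}H^j(F^\bullet)\subseteq yH^j(F^\bullet)$ to $\q_{j+d-c}H^j(F^\bullet)=0$ \emph{without} inflating the power of $\db$. Iterating crudely gives only $\q_{j+d-c}H^j(F^\bullet)\subseteq\bigcap_k y^kH^j(F^\bullet)=0$ after replacing $\q_{j+d-c}$ by a power, which reproduces precisely the imprecise form of \cite[8.1.3]{BH}. To obtain the sharp power I would use, in the application to $H_i(\xb;S)$, the Künneth spectral sequence $E^2_{p,s}=H_s(\mathbf y;H_p(\xb;S))\Rightarrow H_{p+s}(\mathbf z;S)$ attached to enlarging $\xb$ by $d-m$ generic parameters $\mathbf y$ to a homogeneous system of parameters $\mathbf z=\xb,\mathbf y$ — so the abutment has finite length and is annihilated by $\q_{d-\bullet}$ by Corollary~\ref{rob} — and reconstruct the annihilator of $H_i(\xb;S)$ from those of its Koszul homologies $H_s(\mathbf y;H_i(\xb;S))$ by a descending induction on $s$, in which the auxiliary vanishing $\q_{d-i-1}\bigl(0:_{H_i(\xb;S)}y\bigr)=0$ (a proper divisor of $\q_{d-i}$) controls the socle-type subquotients entering the differentials, and the containment $\mathrm{Supp}_S H_i(\xb;S)\subseteq V(\db)$ supplies the $\db$-torsion making graded Nakayama tight. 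Making this bookkeeping precise, so that no factor of $\db$ beyond $\q_{d-i}$ is ever introduced, is the only genuine obstacle; granting it, the reduction above completes the proof.
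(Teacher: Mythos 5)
There is a genuine gap here, and you flag it yourself: your argument only yields $\q_{j+d-c}H^j(F^\bullet)\subseteq yH^j(F^\bullet)$, and you concede that the step upgrading this to actual annihilation by the exact element $\q_{d-i}$ (rather than by some power of $\db$) is ``the only genuine obstacle,'' with the proposed repair via the K\"unneth spectral sequence for $\mathbf z=\xb,\mathbf y$ left as unexecuted bookkeeping. That missing step is not a technicality one can grant: reading off annihilators of $E^2$-terms from annihilators of the abutment goes against the direction a spectral sequence naturally gives, and nothing in your sketch shows the factors $\db^{a_j}$ can be threaded through the differentials without accumulating extra powers. (The classical arguments succeed precisely because each dimension-reduction step consumes one factor $\aF_j$ of the product $\aF_0\cdots\aF_{d-i}$; your induction carries the whole product $\q_{j+d-c}$ unchanged through the reduction, which is exactly why it jams.)

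The paper, by contrast, does not reprove anything of this sort. Its proof of Corollary \ref{Sch-bh} is immediate from Theorem \ref{loc} together with the cited classical result: by definition of $a_j$ one has $\db^{a_j}\in\aF_j:=\ann_S H^j_{S_+}(S)$ for $0\le j\le d-1$, hence $\q_{d-i}=\prod_{j=0}^{d-i}\db^{a_j}$ lies in the product $\aF_0\cdots\aF_{d-i}$, and \cite[8.1.3]{BH} already asserts in sharp form (no radicals, no unspecified powers) that this product of cohomology annihilators kills $H_i(\xb)$ when $\codim(\xb)=m$. So your characterization of \cite[8.1.3]{BH} as the ``imprecise form'' is off: the only imprecision the paper removes is that the classical statement involves the abstract ideals $\aF_j$, and Theorem \ref{loc} supplies an explicit element of each. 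In short, the hard sharpness statement you could not establish is precisely the content of the theorem the paper cites; as written, your proposal does not prove the corollary.
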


 \s \emph{Technique to prove our results:}
 For $i \geq 0$
let $\Sq^i$ for $p = 2$ and $\Pc^i$ (for $p \geq 3$) denote the Steenrod operators on $R$ and on $S$. An ideal $I$ of ($R$ or $S$) is said to be $\Pc^*$-invariant if $\Pc^i(I) \subseteq I$ for all $i \geq 1$ (analogously for $\Sq^*$-invariant ideals).  It is known that if $I$ is a non-zero $\Pc^*$-invariant ideal of $S$ then $\db \in \sqrt{I}$, see \cite[11.4.4]{S}.
We show (see section three) that there exists $\F_q$-linear maps $\Qc^m \colon H^i(G, R) \rt H^i(G, R)$ for all $m \geq 0 $ with $\Qc^0$ being the identity;  such that for any $s \in S$ and $\alpha \in H^i(G, R)$ we have
\[
\Qc^m(s\alpha) = \sum_{a + b = m}\Pc^a(s)\Qc^b(\alpha).
\]
 Using this fact it is not difficult to show that $J_i = \ann_S H^i(G, R)$ is a $\Pc^*$-invariant ideal in $S$, see \ref{invar}. By Hilbert's Theorem 90 we get that $J_i \neq  0$ for $i \geq 1$, see \ref{90}. Using these results  we get a proof of Theorem \ref{main}.

 \begin{remark}
Our operators $\Qc^m \colon H^i(G, R) \rt H^i(G, R)$ should \emph{not} be confused with the Steenrod operators on $H^*(G, \F_p)$ (see \cite[4.4.4 and 4.4.5]{B}).
\end{remark}

 Theorem \ref{loc} is an application of the Ellingsrud-Skjelbred spectral sequences \cite{ES}. The applications follows from standard results as given in \cite[Chapter 8]{BH}.

 We note that for \emph{any} group $G \subseteq GL_d(\F_q)$ the ring of invariants contains $\db$ as $\db$ is an invariant of $GL_d(\F_q)$. It is suprising to us that the (power of) same element annihilates cohomology (only the powers might be the different in each case.

We now describe in brief the contents of this paper. In section two we discuss a few preliminary results that we need. In section three we construct our operators $\Qc^m$ on $H^*(G, R)$.
In section four we prove Theorem \ref{main}. Finally in section five we prove Theorem \ref{loc}.

\section{Preliminaries}
In this section we discuss some preliminary results that we need.

\textbf{I}: \emph{Group cohomology}.

\s\label{setup-gc-intro}  Let $S$ be a commutative Noetherian ring.
 Let $G$ be a finite group. Let $S[G]$ be the group ring and let $Mod(S[G])$ be the category of left $S[G]$-modules. Let $(-)^G$ be the functor of $G$-fixed points. Let $H^n(G,-)$ be the $n^{th}$ right derived functor of
$(-)^G$. Let $M$ be an $S[G]$-module. We call $H^n(G, M)$ the \emph{$n^{th}$-cohomology group} of $G$ (with coefficients in $M$). Note this cohomology module also depends on $S$ which we have suppressed. We note that
$H^n(G, M) = \Ext^n_{S[G]}(S, M)$.

Group cohomology behaves well with respect to localization. This fact is well-known, for instance see \cite[2.3]{P}.
\begin{proposition}
\label{localization}(with hypotheses as in \ref{setup-gc-intro})  Let $M$ be a $S[G]$-module.  Let $W$ be a multiplicatively closed subset of $S$. Then $W^{-1}M$ is a $W^{-1}S[G]$-module and $W^{-1}H^i(G, M) \cong H^i(G, W^{-1}M)$ for all $i \geq 0$.
\end{proposition}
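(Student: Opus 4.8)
The plan is to realize both sides as cohomology of an explicit complex and check that localization passes through. Recall $H^i(G,-) = \Ext^i_{S[G]}(S,-)$, and the analogous description over $W^{-1}S$. Since $G$ is finite, $S[G] = \bigoplus_{g \in G} Sg$ is a finite free $S$-module, so localization (which commutes with finite direct sums) gives a canonical isomorphism $W^{-1}(S[G]) \cong (W^{-1}S)[G]$; in particular $W^{-1}M$ acquires the structure of a $(W^{-1}S)[G]$-module, with $G$-action induced from that on $M$. I will identify $W^{-1}S[G]$ with $(W^{-1}S)[G]$ throughout.

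Next I would take $P_\bullet \rt S$ to be the (unnormalized) bar resolution of $S$ over $S[G]$, whose $n$-th term is $S[G]^{\otimes_S (n+1)}$, a free $S[G]$-module of finite rank $|G|^n$, with differential the usual alternating sum of face maps. Applying $W^{-1}(-)$ termwise yields the bar resolution of $W^{-1}S$ over $(W^{-1}S)[G]$: localization is exact, so $W^{-1}P_\bullet \rt W^{-1}S$ is still a resolution; it carries a free module of finite rank to a free module of the same rank; and since $W^{-1}(A \otimes_S B) \cong W^{-1}A \otimes_{W^{-1}S} W^{-1}B$ we get $W^{-1}(S[G]^{\otimes_S (n+1)}) \cong ((W^{-1}S)[G])^{\otimes_{W^{-1}S}(n+1)}$, compatibly with the face maps, which are built from multiplication by group elements and $S$-bilinear contractions.

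Now compute. As $W^{-1}(-)$ is exact, it commutes with the cohomology of a complex, so
\[
W^{-1}H^i(G,M) = W^{-1}H^i\bigl(\Hom_{S[G]}(P_\bullet, M)\bigr) \cong H^i\bigl(W^{-1}\Hom_{S[G]}(P_\bullet, M)\bigr).
\]
For each $n$, $P_n$ is finitely presented (indeed finite free) over $S[G]$, so the natural map
\[
W^{-1}\Hom_{S[G]}(P_n, M) \xar \Hom_{W^{-1}S[G]}(W^{-1}P_n, W^{-1}M)
\]
is an isomorphism, and these isomorphisms are natural in $P_\bullet$, hence commute with the induced differentials. Combining this with the previous step and the fact that $W^{-1}P_\bullet$ is a free resolution of $W^{-1}S$ over $(W^{-1}S)[G]$,
\[
W^{-1}H^i(G,M) \cong H^i\bigl(\Hom_{(W^{-1}S)[G]}(W^{-1}P_\bullet, W^{-1}M)\bigr) = \Ext^i_{(W^{-1}S)[G]}(W^{-1}S, W^{-1}M) = H^i(G, W^{-1}M).
\]

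There is no serious obstacle; the only care needed is the bookkeeping that the localized bar complex is again a bar complex and that the comparison isomorphisms $W^{-1}\Hom_{S[G]}(P_n,-) \cong \Hom_{W^{-1}S[G]}(W^{-1}P_n, W^{-1}(-))$ are natural in the complex variable. A slicker alternative is to note that $S[G]$ is Noetherian (as $S$ is and $G$ is finite) and $S$ is finitely generated — hence finitely presented — as an $S[G]$-module, and then invoke the standard fact that localization commutes with $\Ext^i(N,-)$ for $N$ finitely presented over a Noetherian ring; this reduces the whole statement to the identification $W^{-1}S[G] \cong (W^{-1}S)[G]$. Either way the result is essentially formal, consistent with our merely citing \cite[2.3]{P}.
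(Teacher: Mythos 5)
Your argument is correct, and in fact the paper does not prove this proposition at all: it dismisses it as well-known and cites \cite[2.3]{P}, so your bar-resolution argument supplies the standard proof that the paper delegates to the reference. The key points are all in place: $W$ lies in $S$, hence is central in $S[G]$, so the localization $W^{-1}M$ and the identification $W^{-1}(S[G])\cong (W^{-1}S)[G]$ are legitimate; each $P_n$ is finite free over $S[G]$, so $W^{-1}\Hom_{S[G]}(P_n,M)\cong \Hom_{W^{-1}S[G]}(W^{-1}P_n,W^{-1}M)$ naturally; and exactness of localization lets you pass it through cohomology and shows $W^{-1}P_\bullet$ is again a free resolution of $W^{-1}S$. (Implicitly you are also using that $\Ext^i_{(W^{-1}S)[G]}(W^{-1}S,-)$ computes $H^i(G,-)$ for $W^{-1}S[G]$-modules, which is exactly the paper's definition of group cohomology over the base ring $W^{-1}S$, so nothing is missing there.) One small caution about your ``slicker alternative'': the standard statement that localization commutes with $\Ext^i(N,-)$ for $N$ finitely generated is usually formulated for commutative Noetherian rings, whereas $S[G]$ is in general noncommutative; it does extend to central localization of a module-finite algebra over a commutative Noetherian ring (precisely because $N$ then admits a resolution by finite free modules), but that extension is essentially the bar-resolution argument you already gave, so the remark should be phrased as such rather than as an appeal to an off-the-shelf commutative fact.
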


As a consequence we have
\begin{proposition}
\label{90}(with hypothesis as in \ref{setup}) For all $i \geq 1$ we have\\ $J_i = \ann_S H^i(G, R)$ is a non-zero ideal of $S$.
\end{proposition}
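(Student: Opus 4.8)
The plan is to reduce Proposition \ref{90} to the classical vanishing of higher group cohomology for a Galois field extension. Since $R = \F_q[V]$ is a polynomial ring it is an integral domain, hence so is its subring $S = R^G$; write $K = \operatorname{Frac}(S)$ and $L = \operatorname{Frac}(R)$, and set $W = S \setminus \{0\}$, so that $K = W^{-1}S$ and $L = W^{-1}R$ (the latter because $R$ is integral over $S$, so $W^{-1}R$ is a domain, module-finite over the field $K$, hence a field containing $R$, hence equal to $L$).

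First I would record that $G$ acts faithfully on $R$: since $G \subseteq GL(V)$ the induced action on $R_1 = V^*$ is faithful, and an algebra automorphism of $R$ fixing $R_1$ pointwise is the identity. Hence $G$ acts faithfully on $L$ by field automorphisms with fixed field $L^G = \operatorname{Frac}(R^G) = K$. By Artin's lemma $L/K$ is then a finite Galois extension with Galois group $G$, and by the normal basis theorem $L$ is free of rank one as a $K[G]$-module. Since free $K[G]$-modules are cohomologically trivial in positive degrees (Shapiro's lemma), we get
\[
H^i(G, L) \;=\; H^i(G, K[G]) \;=\; 0 \qquad \text{for all } i \geq 1 .
\]

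Now I would apply Proposition \ref{localization} with the multiplicatively closed set $W$ to obtain $W^{-1}H^i(G, R) \cong H^i(G, L) = 0$ for $i \geq 1$. On the other hand $H^i(G, R) = \Ext^i_{S[G]}(S, R)$ is a finitely generated $S$-module: by Noether's finiteness theorem $S$ is a finitely generated $\F_q$-algebra, hence Noetherian, and $R$ is module-finite over $S$; thus $S[G]$ is Noetherian and module-finite over $S$, and both $S$ and $R$ are finitely generated $S[G]$-modules, so a resolution of $S$ by finitely generated free $S[G]$-modules gives, after applying $\Hom_{S[G]}(-,R)$, a complex of finitely generated $S$-modules with finitely generated cohomology. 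A finitely generated module over the domain $S$ that vanishes after inverting every nonzero element is annihilated by a single nonzero element of $S$ (clear denominators over a finite generating set and multiply). Hence $J_i = \ann_S H^i(G, R) \neq 0$ for all $i \geq 1$. The only step needing genuine care is the identification of $L/K$ as a Galois extension with group $G$ (Artin's lemma) and the resulting vanishing $H^i(G,L)=0$ via the normal basis theorem; the faithfulness of the action, the finiteness statements, and the clearing-of-denominators argument are all routine.
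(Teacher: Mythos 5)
Your proposal is correct and follows essentially the same route as the paper: localize at $W = S\setminus\{0\}$, identify the fraction field extension as Galois with group $G$, use the vanishing of $H^i(G,\operatorname{Frac}(R))$ for $i\geq 1$, and conclude from finite generation of $H^i(G,R)$ over $S$. The only cosmetic differences are that the paper cites the additive form of Hilbert's Theorem 90 where you derive the vanishing from the normal basis theorem (the same classical fact), and you spell out the finite-generation and clearing-of-denominators step that the paper leaves implicit (cf.\ its Remark on graded finite generation of $H^n(G,M)$).
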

\begin{proof}
  Let $W = S \setminus \{ 0 \}$. Then notice $W^{-1}R = K$ the quotient field of $R$. So for $i \geq 1$ we have  $W^{-1}H^i(G, R) = H^i(G, K)$, see \ref{localization}. We note that $K$ is a Galois extension of the field $W^{-1}S = L$  with Galois group $G$, (see \cite[ Chapter VI, 1.8]{L}). By the additive form of Hilbert Theorem 90, see
  \cite[6.3.7]{W},  $H^i(G, K)  = 0$ for all $i \geq 1$. The result follows.
\end{proof}

\s \label{res}(with hypotheses as in \ref{setup-gc-intro}).  To prove our results we need a more concrete definition of group cohomology. Let $M$ be a $S[G]$-module. For $n \geq 0$ let
$C^n(G, M)$ be the $S$-module of all functions from $G^n$ to $M$; (here $G^0 = \{1\}$). We define the differential $d^n \colon C^n(G, M) \rt C^{n+1}(G, M)$ by
\begin{align*}
  (d^n(\psi))(g_1, \ldots, g_{n+1}) =  g_1\psi(g_2, \ldots, g_{n+1}) + &\sum_{i = 1}^{n}\psi(g_1, \ldots, g_ig_{i+1}, \cdots, g_{n+1})  \\
  & + (-1)^{n+1}\psi(g_1, \ldots, g_n).
\end{align*}
Clearly $d^n$ is $S$-linear. Also $d^{n+1}\circ d^n = 0$. So $C^*(G, M)$ is a complex. Its cohomology is $H^*(G, M)$, see \cite[6.5.5]{W}.

\textbf{II:} \emph{Ellingsrud-Skjelbred spectral sequences.}

In this sub-section we describe the Ellingsrud-Skjelbred spectral sequences \cite{ES} (we follow the exposition given in \cite[8.6]{Lorenz}).
\s Let $S$ be a commutative Noetherian ring. Let $Mod(A)$ be the category of left $A$-modules.

(1) Let $G$ be a finite group. Let $S[G]$ be the group ring and let $Mod(S[G])$ be the category of left $S[G]$-modules.

(2) Let $\A$ be an ideal in $S$. Let $\Gamma_\A(-)$ be the torsion functor associated to $\A$.  Let $H^n_\A(-)$ be the $n^{th}$ right derived functor of
$\Gamma_\A(-)$. Usually $H^n_\A(-)$ is called the $n^{th}$ local cohomology functor of $A$ \wrt \ $\A$.

(3) If $M \in Mod(S[G])$ then note $\Gamma_\A(M) \in Mod(S[G])$.

\s Ellingsrud-Skjelbred spectral sequences  are constructed as follows: Consider the following sequence of functors
\[
(i) \quad   Mod(S[G]) \xrightarrow{(-)^G} Mod(S) \xrightarrow{\Gamma_\A} Mod(S),
\]
\[
(ii)   \quad   Mod(S[G]) \xrightarrow{\Gamma_\A} Mod(S[G]) \xrightarrow{(-)^G} Mod(S)
\]
We then notice
\begin{enumerate}[\rm (a)]
\item
The above compositions are equal.
\item
It is possible to use Grothendieck spectral sequence of composite of functors to both (i) and (ii) above; see
\cite[8.6.2]{Lorenz}.
\end{enumerate}
Following Ellingsrud-Skjelbred  we let $H^n_\A(G,-)$ denote the $n^{th}$ right derived functor of this composite
functor. So by (i) and (ii) we have two first quadrant spectral sequences for each $S[G]$-module $M$
\[
(\alpha)\colon \quad \quad  E_2^{p,q} = H^p_\A(H^q(G, M)) \Longrightarrow H^{p+q}_\A(G,M), \ \text{and}
\]
\[
(\beta)\colon \quad \quad \mathcal{E}_2^{p,q} = H^p(G, H^q_\A(M)) \Longrightarrow H^{p+q}_\A(G,M).
\]
\begin{remark}\label{grading}
(1) If $S$ is $\mathbb{N}$-graded ring then $S[G]$ is also $\mathbb{N}$-graded (with $\deg \sigma = 0$ for all $\sigma \in G$).

(2)  If $M$ is a finitely generated graded left $S[G]$-module then $H^n(G, M)$ are finitely generated graded $S$-module. This can be easily seen by taking a graded free resolution of $S$ consisting of finitely generated free  graded $S[G]$-modules.

(3) Ellingsrud-Skjelbred spectral sequences have an obvious graded analogue.
\end{remark}

\textbf{III:} \emph{Steenrod operators}.

We follow the exposition given in \cite{S}.
\s Let $p$ be a prime, $q = p^s$ and let $V$ be a $\F_q$-vector space of dimension $d$. Define
Define $P(\xi)\colon \F_q[V] \rt \F_q[V][\xi]$ (with $\deg \xi = 1-q$) by the rules:
\begin{enumerate}
  \item $P(\xi)(v)  = v + v^q\xi$  for all $v \in  V^*$.
  \item $P(\xi)(u + w) = P(\xi)(u) + P(\xi)(w)$ for all $u,w \in F_q[V]$.
  \item $P(\xi)(uw) = P(\xi)(u)P(\xi)(w)$ for all $u,w \in F_q[V]$.
  \item $P(\xi)(1) = 1$.
   \end{enumerate}
  We note that $P(\xi)$ is a ring homomorphism of degree zero.

By separating out homogeneous components we obtain $\F_q$ linear maps \\ $\Pc^i \colon \F_q[V] \rt \F_q[V]$ by the requirement
$$P(\xi)(f) = \sum_{i \geq 0}\Pc^i(f)\xi^i.$$
Note $\Pc^0$ is the identity.
The operations $\Pc^i$ are called the \emph{Steenrod reduced power operations} over $\F_q$ and when $p = 2$, also denoted by $\Sq^i$ and refereed to as \emph{Steenrod squaring operations}.
We will use the following property:
$$\Pc^k(uv) = \sum_{i+j = k}\Pc^i(u)\Pc^j(v). $$

\s Let $G \subseteq GL(V)$ be a finite group. Let $S = \F_q[V]^G$. We define an action of $G$ on $\F_q[V][\xi]$ by giving the trivial action on $\xi$. We note that $\sigma(P(\xi)(r)) = P(\xi)(\sigma(r))$ for all $\sigma \in G$. Thus the maps $\Pc^i$ restrict to maps from $S$ to $S$.

\s An ideal $I$ of $S$ is said to be \emph{$\Pc^*$-invariant} if $\Pc^i(I)\subseteq I$ for all $i \geq 0$.

\textbf{IV:} \emph{Annhilators}

In this subsection $T = \bigoplus_{n \geq 0}T_n$ is a Noetherian graded ring with $T_0 = K$ a field (we \emph{do not} assume $T$ is standard graded). Let $\Modg(T)$ denote the category of all graded $T$-modules and let  $\modg(T)$ denote the category of all finitely generated graded $T$-modules. Let $\Ar(T)$ denote the category of all graded $^*$-Artinian modules. Note if $M \in \Modg(T)$ then $\ann_T M$ is a homogeneous ideal of $T$.

Let $E = E_T(K)$ be the injective hull of $K = T/T_+$. Note that $E$ is a graded $T$-module. If $M \in \Modg(T)$ set $M^\vee = \gHom(M, T)$.

\s (\emph{Matlis duality:}) (see \cite[3.6.17]{BH}). If $X \in \modg(T)$ then $X^\vee \in \Ar(T)$. If $Y \in \Ar(T)$ then $Y^\vee \in \modg(T)$. Furthermore $X^{\vee \vee}\cong X$ and $Y^{\vee \vee} \cong Y$.

The following result is well-known and easy to prove.
\begin{proposition}
\label{ann-dual} Let $X \in \modg(T)$ and $Y \in \Ar(T)$. Then
$$\ann_T X^\vee = \ann_T X \quad \text{and} \quad \ann_T Y^\vee = \ann_T Y.$$
\end{proposition}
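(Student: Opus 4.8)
The plan is to derive both identities from the single, direction-agnostic inclusion $\ann_T M \sub \ann_T M^\vee$, valid for an \emph{arbitrary} graded $T$-module $M$, combined with the reflexivity isomorphisms $X^{\vee\vee}\cong X$, $Y^{\vee\vee}\cong Y$ recorded in the Matlis duality paragraph above.

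Since $\ann_T(-)$ is a homogeneous ideal, it is enough to treat homogeneous elements $a \in T$. If $a$ is homogeneous with $aM = 0$, then for every $\phi \in M^\vee$ and every $x \in M$ one has $(a\phi)(x) = \phi(ax) = 0$, so $a\phi = 0$; thus $aM^\vee = 0$, which proves $\ann_T M \sub \ann_T M^\vee$. (Equivalently: multiplication by $a$ on $M^\vee$ is the Matlis dual of multiplication by $a$ on $M$, hence vanishes whenever the latter does.) Now I would apply this twice. For $X \in \modg(T)$ it gives $\ann_T X \sub \ann_T X^\vee$; applied to the module $X^\vee \in \Ar(T)$ it gives $\ann_T X^\vee \sub \ann_T X^{\vee\vee} = \ann_T X$, the last equality being Matlis duality $X^{\vee\vee}\cong X$. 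Combining the two inclusions yields $\ann_T X^\vee = \ann_T X$. Running the same argument starting from $Y \in \Ar(T)$ and using $Y^{\vee\vee}\cong Y$ yields $\ann_T Y^\vee = \ann_T Y$.

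There is no genuine obstacle here; the only ingredients beyond a one-line function-chase are that $\ann_T$ is homogeneous, so that one may restrict attention to homogeneous $a$, and the reflexivity isomorphisms $X^{\vee\vee}\cong X$, $Y^{\vee\vee}\cong Y$, which is precisely the Matlis duality statement cited immediately above. In particular one does not need $E$ to be a faithful $T$-module: reflexivity of the duality supplies the reverse inclusion for free.
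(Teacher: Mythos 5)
Your proof is correct: the paper omits the argument (calling the result ``well-known and easy to prove''), and your route --- the one-line inclusion $\ann_T M \subseteq \ann_T M^\vee$ for arbitrary $M$, applied twice and closed up via the reflexivity isomorphisms $X^{\vee\vee}\cong X$, $Y^{\vee\vee}\cong Y$ from Matlis duality --- is exactly the standard argument the paper intends. (A minor remark: the reduction to homogeneous $a$ is unnecessary, since $(a\phi)(x)=\phi(ax)=0$ works for any $a\in T$, but this does no harm.)
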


\s Recall $D \in \Modg(T)$ is said to be a \emph{sub-quotient} of $M \in \Modg(T)$ if there exits graded submodule $U, V$ of $M$ with $U \subseteq V$ and $V/U \cong D$.

We will need the following easily proved result:
\begin{proposition}\label{ann}(with setup as above) We have
\begin{enumerate}[\rm (1)]
  \item If
 $D \in \Modg(T)$ is a sub-quotient of $M \in \Modg(T)$ then $\ann_T M \subseteq \ann_T D$.
  \item Let $X_i \in \modg(T)$ for $1 \leq i \leq 3$. If we have an exact sequence of graded modules $X_1 \rt X_2 \rt X_3$ then
  \[
  \sqrt{\ann_T X_1} \cap \sqrt{\ann_T X_3} \subseteq \sqrt{\ann_T X_2}.
  \]
  \item Let $Y_i \in \Ar(T)$ for $1 \leq i \leq 3$.  If we have an exact sequence of graded modules $Y_1 \rt Y_2 \rt Y_3$ then
  \[
  \sqrt{\ann_T Y_1} \cap \sqrt{\ann_T Y_3} \subseteq \sqrt{\ann_T Y_2}.
  \]\qed
\end{enumerate}
\end{proposition}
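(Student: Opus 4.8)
The plan is to treat the three items separately, in roughly increasing order of difficulty --- which is not saying much, since each is elementary. Item (1) is a one-line containment, item (2) is a diagram chase that is the real content of the proposition, and item (3) will be deduced either by repeating that chase verbatim or by dualizing it into item (2).

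For (1): write $D \cong V/U$ with $U \subseteq V$ graded submodules of $M$. If $a \in \ann_T M$ then $aV \subseteq aM = 0 \subseteq U$, so $a$ annihilates $V/U \cong D$; hence $\ann_T M \subseteq \ann_T D$.

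For (2): write the maps as $X_1 \xrightarrow{f} X_2 \xrightarrow{g} X_3$, so that exactness means $\image f = \ker g$ as graded submodules of $X_2$. Fix $a \in \sqrt{\ann_T X_1} \cap \sqrt{\ann_T X_3}$ and choose integers $m, n \geq 1$ with $a^m X_1 = 0$ and $a^n X_3 = 0$. For any $x \in X_2$ we have $g(a^n x) = a^n g(x) = 0$, so $a^n x \in \ker g = \image f$, say $a^n x = f(y)$ with $y \in X_1$; then $a^{m+n} x = a^m f(y) = f(a^m y) = 0$. As $x \in X_2$ was arbitrary, $a^{m+n} X_2 = 0$, that is, $a \in \sqrt{\ann_T X_2}$. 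For (3) the identical chase works --- $\Ar(T)$ is an abelian category, so ``exact sequence'' again means $\image f = \ker g$ --- or, alternatively, one may apply the contravariant Matlis duality functor $(-)^\vee$ to $Y_1 \to Y_2 \to Y_3$ to obtain an exact sequence $Y_3^\vee \to Y_2^\vee \to Y_1^\vee$ in $\modg(T)$, invoke (2), and then replace each $\ann_T Y_i^\vee$ by $\ann_T Y_i$ via Proposition \ref{ann-dual}.

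I do not expect a genuine obstacle. The only points deserving a moment's care are that exactness must be used in the honest form $\image f = \ker g$, which is what legitimizes the element chase, and that the exponent $m+n$ produced in (2) depends only on $a$ and not on the element $x \in X_2$ --- this is precisely what promotes the pointwise vanishing $a^{m+n}x = 0$ to the ideal-theoretic conclusion $a^{m+n} \in \ann_T X_2$. It is perhaps worth remarking that finite generation and the ${}^*$-Artinian hypothesis are not actually used in the arguments themselves; they enter only through the Matlis-duality reformulation of (3) and through the intended applications in the rest of the paper.
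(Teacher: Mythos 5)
Your proof is correct, and since the paper states Proposition \ref{ann} without proof (it is labelled an ``easily proved result''), your element chase --- with the observation that exactness gives $a^n x \in \ker g = \image f$ and hence $a^{m+n}x = 0$, plus the remark that (3) follows either by the identical chase or by Matlis duality together with Proposition \ref{ann-dual} --- is exactly the intended argument. Nothing to correct.
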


\s \label{bella}(with hypotheses as above). Let $T_+ = \bigoplus_{n \geq 1}T_n$. Let $H^i_{T_+}(-)$ be the $i^{th}$-local cohomology (\wrt \ $T_+$) functor. Let $M \in \modg(T)$. Then
\begin{enumerate}[\rm (1)]
  \item $H^i_{T_+}(M) \in \Ar(T)$ for all $i \geq 0$.
  \item $\ann_T M \subseteq \ann_T H^i_{T_+}(M)$ for all $i \geq 0$.
\end{enumerate}
\section{Construction of our operators}
In this section we prove
\begin{theorem}\label{op}
(with hypotheses as in \ref{setup}) Fix $i \geq 0$. There exists $\F_q$-linear maps \\ $\Qc^m \colon H^i(G, R) \rt  H^i(G, R)$ for $m \geq 0$ with $\Qc^0$ being the identity such that for any $s \in S$ and $\alpha \in H^i(G, R)$ we have
\[
\Qc^m(s\alpha) = \sum_{a + b = m}\Pc^a(s)\Qc^b(\alpha).
\]
\end{theorem}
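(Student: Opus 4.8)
The plan is to extend the Steenrod operator $P(\xi)$ from $R$ to the cochain complex $C^*(G,R)$ computing group cohomology (see \ref{res}), and then pass to cohomology. First I would define, for each $n \geq 0$, a map $P(\xi)^n \colon C^n(G,R) \rt C^n(G,R)[\xi]$ by applying $P(\xi)$ pointwise: for $\psi \colon G^n \rt R$, set $(P(\xi)^n\psi)(g_1,\dots,g_n) = P(\xi)(\psi(g_1,\dots,g_n))$. Since the differential $d^n$ of \ref{res} is built out of the $G$-action on $R$, sums, and signs, and since $P(\xi)$ commutes with the $G$-action (as noted in the Steenrod subsection, $\sigma(P(\xi)(r)) = P(\xi)(\sigma(r))$) and is additive, the key step is to check that $P(\xi)^{n+1}\circ d^n = d^n \circ P(\xi)^n$, i.e. that $P(\xi)^\bullet$ is a morphism of complexes (of $\F_q[\xi]$-modules, extending $d^n$ by $\xi$-linearity). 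This is a direct term-by-term verification using the explicit formula for $d^n$; the only point requiring the module property rather than just additivity is the leading term $g_1\psi(g_2,\dots,g_{n+1})$, which is handled by $P(\xi)(\sigma r) = \sigma P(\xi)(r)$.

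Next I would separate homogeneous components in $\xi$: writing $P(\xi)^n\psi = \sum_{m\geq 0}\Qc^m_n(\psi)\xi^m$ defines $\F_q$-linear maps $\Qc^m_n \colon C^n(G,R) \rt C^n(G,R)$ with $\Qc^0_n$ the identity, and the chain-map property above says each $\Qc^m_\bullet$ commutes with the differentials, hence induces $\F_q$-linear maps $\Qc^m \colon H^i(G,R) \rt H^i(G,R)$ with $\Qc^0 = \mathrm{id}$. It remains to establish the Cartan-type formula $\Qc^m(s\alpha) = \sum_{a+b=m}\Pc^a(s)\Qc^b(\alpha)$ for $s \in S$ and $\alpha \in H^i(G,R)$. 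At the cochain level, the $S$-module structure on $C^n(G,R)$ is pointwise multiplication, so $(s\psi)(g_1,\dots,g_n) = s\cdot\psi(g_1,\dots,g_n)$, and applying $P(\xi)$ together with its multiplicativity $P(\xi)(uv)=P(\xi)(u)P(\xi)(v)$ gives $P(\xi)^n(s\psi) = P(\xi)(s)\cdot P(\xi)^n(\psi)$ in $C^n(G,R)[\xi]$ (valid because $s$ is $G$-invariant, so $P(\xi)(s)$ lies in $S[\xi]$ and the product is again a cochain). Comparing coefficients of $\xi^m$ yields $\Qc^m_n(s\psi) = \sum_{a+b=m}\Pc^a(s)\Qc^b_n(\psi)$ at the cochain level, and this descends to cohomology since all maps involved are compatible with the differentials and $\Pc^a(s) \in S$ acts on $H^i(G,R)$.

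The main obstacle I anticipate is purely bookkeeping: verifying that $P(\xi)^\bullet$ really is a chain map, i.e. the commutation with $d^n$. This is where one must be careful that $P(\xi)$ is applied to the \emph{values} of the cochain and that the $G$-action appearing in $d^n$ is intertwined correctly — but given $\sigma P(\xi) = P(\xi)\sigma$ and additivity of $P(\xi)$, every term of $d^n(P(\xi)^n\psi)$ matches the corresponding term of $P(\xi)^{n+1}(d^n\psi)$ on the nose. No spectral sequence or choice of resolution is needed, since the bar-type complex $C^*(G,R)$ is functorial and concrete enough to carry the operators directly; this also makes transparent that the construction is independent of choices. I would remark that one could alternatively build the $\Qc^m$ via a $G$-equivariant free resolution of $\F_q$ over $\F_q[G]$ and functoriality of $\Ext$, but the cochain approach is the cleanest route to the explicit Cartan formula.
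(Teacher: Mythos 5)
Your proposal is correct and follows essentially the same route as the paper: the paper defines $\Qc^m(\psi) = \Pc^m\circ\psi$ on the cochain complex $C^*(G,R)$, checks the chain-map property using $g_1\Pc^m(\psi(\cdots)) = \Pc^m(g_1\psi(\cdots))$, and deduces the Cartan-type formula from $\Pc^k(uv)=\sum_{i+j=k}\Pc^i(u)\Pc^j(v)$, which is exactly your argument with the generating function $P(\xi)$ replaced by its homogeneous components from the start.
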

\begin{proof}
Let $\Pc^m \colon R \rt R$ be the Steenrod operators on $R$.
We work with the chain complex $C^*(G, R)$, see \ref{res}. For $\psi \in C^n(G, R)$ define $\Qc^m(\psi) = \Pc^m\circ \psi$. Clearly $\Qc^m \colon C^*(G, R) \rt C^*(G, R)$ are $\F_q$-linear maps.
Also as $\Pc^0$ is the identity we obtain that $\Qc^0$ is the identity.
If we show that it is  a chain map then we have $\F_q$-linear maps $\Qc^m \colon H^i(G, R) \rt H^i(G, R)$.
To do this let $\psi \in C^n(G, R)$. Then
\begin{align*}
  (d^n(\Qc^m(\psi))(g_1, \ldots, g_{n+1}) &=  g_1\Pc^m(\psi(g_2, \ldots, g_{n+1})) + \\
   &\sum_{i = 1}^{n}\Pc^m(\psi(g_1, \ldots, g_ig_{i+1}, \cdots, g_{n+1}))  \\
  & + (-1)^{n+1}\Pc^m(\psi(g_1, \ldots, g_n)).
\end{align*}
We also have
\begin{align*}
  \Qc^m(d^n((\psi))(g_1, \ldots, g_{n+1}) &= \Pc^m( g_1(\psi(g_2, \ldots, g_{n+1}))) + \\
   &\sum_{i = 1}^{n}\Pc^m(\psi(g_1, \ldots, g_ig_{i+1}, \cdots, g_{n+1}))  \\
  & + (-1)^{n+1}\Pc^m(\psi(g_1, \ldots, g_n)).
\end{align*}
Notice
$$ g_1\Pc^m(\psi(g_2, \ldots, g_{n+1})) = \Pc^m( g_1(\psi(g_2, \ldots, g_{n+1}))).$$
Thus $\Qc^m \colon C^*(G, R) \rt C^*(G, R)$ are $F_q$-linear chain maps and so induce $F_q$-linear maps $\Qc^m \colon H^i(G, R) \rt H^i(G, R)$.

Let $\alpha = [\psi] \in H^i(G, R)$ and let $s \in S$. We have
\begin{align*}
  \Qc^m(s\psi)(g_1, \ldots, g_i) &=  \Pc^m(s\psi(g_1, \ldots, g_m)) \\
   &= \sum_{a + b = m}\Pc^a(s)\Pc^b(\psi(g_1, \ldots, g_i)). \\
  &= \sum_{a + b = m}\Pc^a(s)\Qc^b(\psi)((g_1, \ldots, g_i)).
\end{align*}
The result follows.
\end{proof}

\begin{remark}
  It is not difficult to show that the operators $\Qc^m$ define a module structure of $H^i(G, R)$ over the Steenrod algebra. However we have been unable to find an application of this fact.
\end{remark}
\section{Proof of Theorem \ref{main}}
\s (with hypotheses as in \ref{setup}). Recall $J_i = \ann_S H^i(G, R)$ for $i \geq 0$. We first show
\begin{proposition}\label{invar}
  The ideals $J_i$ are $\Pc^*$-invariant ideals of $S$.
\end{proposition}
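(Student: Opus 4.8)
The plan is to show that $\Pc^k(J_i) \subseteq J_i$ for all $k \geq 0$ by a direct computation, using the commutation formula from Theorem \ref{op}. The case $k = 0$ is trivial since $\Pc^0$ is the identity, so fix $k \geq 1$ and let $s \in J_i$; I must check that $\Pc^k(s)$ annihilates every $\alpha \in H^i(G, R)$.

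The key idea is an induction on $k$. By the formula in Theorem \ref{op}, for any $\alpha \in H^i(G, R)$ we have
\[
0 = \Qc^k(s\alpha) = \sum_{a+b = k}\Pc^a(s)\Qc^b(\alpha) = \Pc^k(s)\alpha + \sum_{\substack{a+b=k \\ a < k}}\Pc^a(s)\Qc^b(\alpha),
\]
where the left-hand side vanishes because $s\alpha = 0$ (as $s \in J_i$), hence $\Qc^k(s\alpha) = \Qc^k(0) = 0$. Thus
\[
\Pc^k(s)\alpha = -\sum_{\substack{a+b=k \\ a < k}}\Pc^a(s)\Qc^b(\alpha).
\]
Now I would argue inductively: assume $\Pc^a(s) \in J_i$ for all $a < k$ (the base case $a = 0$ gives $\Pc^0(s) = s \in J_i$). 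Then each term $\Pc^a(s)\Qc^b(\alpha)$ on the right is zero, because $\Qc^b(\alpha) \in H^i(G, R)$ and $\Pc^a(s) \in J_i$ annihilates it. Therefore $\Pc^k(s)\alpha = 0$. Since $\alpha$ was arbitrary, $\Pc^k(s) \in J_i$, completing the induction. One should also record that $J_i$ is a homogeneous ideal (it is the annihilator of a graded module, cf. the discussion in part IV of Section 2 and Remark \ref{grading}(2)), so that the notion of $\Pc^*$-invariance is being applied in the appropriate graded setting, though this is not strictly needed for the annihilation computation itself.

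I do not anticipate a serious obstacle here: the proposition is essentially a formal consequence of the ``Cartan-type'' formula in Theorem \ref{op}, and the only point requiring a moment's care is organizing the argument as an induction on $k$ so that one may use $\Pc^a(s) \in J_i$ for $a < k$ when clearing the lower-order terms. The mild subtlety is that $\Qc^b$ acts on the cohomology $H^i(G,R)$ rather than on the cochain complex, so one must know (as established in Theorem \ref{op}) that $\Qc^b$ is well-defined on cohomology classes before writing $\Qc^b(\alpha)$; once that is in hand the rest is immediate.
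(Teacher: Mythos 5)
Your proof is correct and follows essentially the same route as the paper: induction on $k$, using the Cartan-type formula of Theorem \ref{op} applied to $s\alpha = 0$ and the fact that $\Qc^b(\alpha) \in H^i(G,R)$ so the lower-order terms vanish by the inductive hypothesis. The paper merely adds the trivial remark that $J_0 = 0$, which your argument covers anyway.
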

\begin{proof}
  We note that $J_0 = 0$. So we have nothing to prove when $i = 0$. So assume $i \geq 1$. Let $t \in J_i$ and let $\alpha \in H^i(G, R)$. We prove by induction on $m$ that $\Pc^m(t) \in J_i$. We note $\Pc^0 = $ identity. So assume $m \geq 1$. When $m = 1$ we have
  \[
  0 = \Qc^1(t\alpha) = \Pc^1(t)\alpha + t\Qc^1(\alpha) = \Pc^1(t)\alpha.
  \]
  The second equality holds as $\Qc^1(\alpha) \in H^i(G, R)$.

  Now assume that $m \geq 2$ and $\Pc^r(t) \in J_i$ for $r < m$.
  By \ref{op} we get
  \begin{align*}
    0 = \Qc^m(t\alpha) & = \sum_{a + b = m}\Pc^a(t)\Qc^b(\alpha) \\
    &= \Pc^m(t)\alpha + \sum_{a+b = m, \ \text{and} \   a < m}\Pc^a(t)\Qc^b(\alpha) \\
     &= \Pc^m(t)\alpha.
  \end{align*}
  The result follows.
\end{proof}
We now give
\begin{proof}[Proof of Theorem \ref{main}]
Fix $i \geq 1$. Also $J_i = \ann_S H^i(G, R)$. By Proposition \ref{90} we get that $J_i \neq 0$. By
Proposition \ref{invar} we get that $J_i$ is a $\Pc^*$-invariant ideal of $S$.  It follows that $\db \in \sqrt{J_i}$, see \cite[11.4.4]{S}.
\end{proof}

\section{Proof of Theorem \ref{loc}}
In this section we give
\begin{proof}[Proof of Theorem \ref{loc}]
We note that if $d \leq 3$ then $S$ is \CM \ (see \cite{S-3}) and so we have nothing to prove. Also if $p$ does not divide order of $G$ then $S$ is \CM, and so we have nothing to prove. So we assume that $p$ divides $|G|$.  If $d \geq 4$ then by a result of Ellingsrud-Skjelbred \cite[3.9]{ES} we have
\[
\depth S \geq \min\{ \dim_{\F_q}V^P + 2, d \},
\]
where $P$ is a Sylow $p$-subgroup of $G$. We note that $\dim_{\F_q} (V)^P \geq 1$, see \cite[8.2.1]{S}. So $\depth S \geq 3$. Thus $H^j_{S_+}(S) = 0$ for $j \leq 2$.

We use the Ellingsrud-Skjelbred  spectral sequence with $\A = S_+$ and $M = R$. We have two first quadrant spectral sequences
\[
(\alpha)\colon \quad \quad  E_2^{p,q} = H^p_{S_+}(H^q(G, R)) \Longrightarrow H^{p+q}_{S_+}(G,R), \ \text{and}
\]
\[
(\beta)\colon \quad \quad \mathcal{E}_2^{p,q} = H^p(G, H^q_{S_+}(R)) \Longrightarrow H^{p+q}_{S_+}(G,R).
\]
We first analyse the spectral sequence $(\beta)$. We note that as $R$ is a finite $S$-module we have $H^q_{S_+}(R) = H^q_{R_+}(R) = 0$ for $q \neq d$. So the spectral sequence $(\beta)$ collapses. In particular $H^n_{S_+}(G, R) = 0$ for $n < d$.

Next we analyse the spectral sequence $(\alpha)$.
We note that for $q > 0$ we get by Theorem \ref{main} and \ref{bella}  that
\begin{equation*}
  \db \in \sqrt{\ann_S E_2^{p,q}}  \quad \text{for $q > 0$}. \tag{*}
\end{equation*}
For $r \geq 3$ we have that $E^{p,q}_r$ is a sub-quotient of $E^{p,q}_{r-1}$. So   we get that
\begin{equation*}
\db \in \sqrt{\ann_S E_r^{p,q}} \quad \text{for $q > 0$ and $r \geq 3$}. \tag{**}
\end{equation*}

For the convenience of the reader we first give a detailed proof of \\ $\db \in \sqrt{\ann_S H^3_{S_+}(S)}$ when $d \geq 4$.
We have $E_2^{3,0} = H^3_{S_+}(S)$.
Note $E^{3,0}_4 = E^{3,0}_\infty$. As $E^{3,0}_\infty$ is a sub-quotient  of $H^3_{S_+}(G, R) = 0$ we get that $E^{3,0}_\infty = 0$.
So we have a surjective map
$$ E_3^{0,2} \rt E_3^{3,0}  \rt 0.$$
By (**) and \ref{ann} we get that $\db \in \sqrt{\ann_S E_3^{3,0}}$.
We have an exact    sequence
$$E_2^{1, 1} \rt E_2^{3,0}  \rt E_3^{3,0} \rt 0. $$
By (*), \ref{ann} and the fact that $\db \in \sqrt{\ann_S E_3^{3,0}}$ we get that
$\db \in \sqrt{\ann_S E_2^{3,0}}$. The result follows.

Now let $3 \leq j \leq d -1$.  We have $E^{j,0}_2 = H^j_{S_+}(S)$.
Note $E^{j,0}_{j+1} = E^{j,0}_\infty$. As $E^{j,0}_\infty$ is a sub-quotient  of $H^j_{S_+}(G, R) = 0$ we get that $E^{j,0}_\infty = 0$.
So we have a surjective map
$$ U_j  \rt E_j^{j,0}  \rt 0,$$
where $U_j$ is $E_j^{p,q}$ for some $p,q$ with $q >0$.
By (**) and \ref{ann} we get that $\db \in \sqrt{\ann_S E_j^{j,0}}$.
Next we have an exact sequence
$$U_{j-1} \rt E_{j-1}^{j,0} \rt E_j^{j,0} \rt 0.$$
By (**), \ref{ann} and the fact that $\db \in \sqrt{\ann_S E_j^{j,0}}$ we get that
$\db \in \sqrt{\ann_S E_{j-1}^{j,0}}$.
Iterating we obtain $\db \in \sqrt{\ann_S E_{2}^{j,0}}$. The result follows.
\end{proof}
\begin{remark}
We did not specify the precise $p,q$ such that $U_i = E_i^{p,q}$ other than the fact that $q > 0$ since it is not germane to our discussion.
\end{remark}


\begin{thebibliography} {99}

\bibitem{B}
D.~J.~Benson,
\emph{Representations and cohomology. II},
Cambridge Stud. Adv. Math., 31
Cambridge University Press, Cambridge, 1998.

\bibitem {BH}  W. Bruns and J. Herzog,
Cohen-Macaulay Rings, revised edition,
Cambridge Studies in Advanced Mathematics, 39.
Cambridge University Press, 1998.


\bibitem{ES}
 G.~Ellingsrud, T.~Skjelbred,
 \emph{Profondeur d'anneaux d'invariants en caract\'{e}ristique p} (French),
  Compositio Math. 41 (1980), no. 2, 233--244.



\bibitem{L}
S.~Lang,
\emph{Algebra},
 Revised third edition. Graduate Texts in Mathematics, 211. Springer-Verlag, New York, 2002.

\bibitem{Lorenz}
 M.~Lorenz,
 \emph{Multiplicative invariant theory},
  Encyclopaedia of Mathematical Sciences, 135. Invariant Theory and Algebraic Transformation Groups, VI. Springer-Verlag, Berlin, 2005.

\bibitem{P}
T.~J.~Puthenpurakal,
\emph{The Cohen-Macaulay Property of Invariant Rings Over Ring of Integers of a Global Field},
 Transformation Groups (2024). https://doi.org/10.1007/s00031-024-09891-y

\bibitem{R}
P.~Roberts,
\emph{Two applications of dualizing complexes over local rings},
Ann. Sci. École Norm. Sup. (4) 9 (1976), no. 1, 103--106.

\bibitem{Sc}
P.~Schenzel,
\emph{Dualizing complexes and systems of parameters},
J. Algebra 58 (1979), no. 2, 495--501.

 \bibitem{S}
  L.~Smith,
\emph{Polynomial invariants of finite groups},
Res. Notes Math., 6
A K Peters, Ltd., Wellesley, MA, 1995.

\bibitem{S-3}
\bysame,
\emph{Some rings of invariants that are Cohen-Macaulay},
Canad. Math. Bull. 39 (1996), no. 2, 238--240.





  \bibitem{W}
  C.~A.~Weibel,
  \emph{An introduction to homological algebra},
Cambridge Stud. Adv. Math., 38
Cambridge University Press, Cambridge, 1994.



\end{thebibliography}
\end{document}